\documentclass{amsart}

\title{On Levi flat hypersurfaces with transversely affine foliation}
\keywords{Levi flat, Transversely affine foliation, Pseudoconvexity, Integrable connection}
\date{\today}
\subjclass[2010]{32V40, 32T15, 32M25, 32D15}
\date{\today}

\author{Masanori Adachi}
\address{Department of Mathematics, Faculty of Science, Shizuoka University.  836 Ohya, Suruga-ku, Shizuoka 422-8529, Japan.}
\email{adachi.masanori@shizuoka.ac.jp}

\author{S\'everine Biard}
\address{Universit\'e Polytechnique Hauts-de-France, Ceramaths, FR CNRS 2956, F-59313 Valenciennes, France}

\email{severine.biard@uphf.fr}

\thanks{We acknowledge the support from Watanabe Trust Fund of the University of Iceland. 
The first author is partially supported by a JSPS KAKENHI Grant Number JP18K13422.}

\usepackage{amsmath,amsthm,amssymb,latexsym}
\usepackage[abbrev]{amsrefs}
\usepackage{color}
\usepackage[normalem]{ulem}
\usepackage{hyperref}
\usepackage{url}
\newtheorem{Theorem}{Theorem}
\newtheorem{Proposition}[Theorem]{Proposition}
\newtheorem{Definition}[Theorem]{Definition}
\newtheorem{Lemma}[Theorem]{Lemma}

\theoremstyle{remark}
\newtheorem{Remark}[Theorem]{Remark}
\newtheorem{Example}[Theorem]{Example}

\newcommand\C{\mathbb{C}}  
\newcommand\R{\mathbb{R}}

\newcommand\Z{\mathbb{Z}}
\newcommand\D{\mathbb{D}}
\newcommand\PP{\mathbb{P}}  
\renewcommand\Re{\operatorname{Re}}

\newcommand\Res{\operatorname{Res}}
\newcommand\tr{\operatorname{tr}}
 
\newcommand{\pa}{\partial}
\newcommand{\opa}{\overline\pa}
\newcommand{\ol}{\overline }

\begin{document}

\begin{abstract}We prove the non-existence of real analytic Levi flat hypersurface whose complement is 1-convex and Levi foliation is transversely affine in a compact K\"ahler surface. 
\end{abstract}

\maketitle

\indent Among the most comprehensible structures in the study of geometric foliations, the class of foliations with transverse affine structure is one of the first studied. Those foliations are called \textit{transversely affine foliations} and are characterized by the change of coordinates of their foliated atlas to be affine in the transverse direction. They have been studied first by  F\'edida and Furness \cite{Fedida-Furness} and then by Seke \cite{Seke} for smooth foliations of real codimension one. The case of codimension one transversely affine holomorphic foliations with singularities have been introduced by  Sc\'ardua \cite{Scardua}. In \cite{Cousin-Pereira},  Cousin and Pereira described the transverse affine structure in terms of a meromorphic connection of the normal bundle to the foliation and give a classification of those foliations in complex projective manifolds.

In this paper, we study transversely affine holomorphic foliations coming from a real analytic \textit{Levi flat} hypersurface in complex K\"ahler surfaces. 
A smooth real hypersurface $M$ in a complex manifold of dimension $\geq 2$ is \textit{Levi flat} if its Levi form vanishes identically or equivalently, if it admits a smooth foliation by complex hypersurfaces, called the \textit{Levi foliation} of $M$. 
When $M$ is real analytic, the Levi foliation extends to a holomorphic foliation on a neighborhood of $M$ (cf. \cite{Cartan}), and therefore may be characterized as an invariant real hypersurface of a codimension one holomorphic foliation. 
The existence of such an invariant closed real hypersurface is expected to pose a strong restriction on the foliations and ambient spaces, and their classification are of great interest, in particular, when the foliation is transversely affine.

A fundamental tool in the study of the (non-)existence of Levi flat hypersurfaces the last 20 years is the pseudoconvexity of their complements,
which are not only locally pseudoconvex by definition but also could show stronger pseudoconvexity in certain circumstances.
Motivated by the conjecture of an exceptional minimal set for holomorphic foliations of the complex projective plane $\mathbb{CP}^2$ (cf. \cite{Cerveau}), 
Lins Neto \cite{LinsNeto} proved the non-existence of real analytic closed Levi flat hypersurface in $\mathbb{CP}^n$, $n\geq 3$, based on the Steinness of locally pseudoconvex proper subdomains of $\mathbb{CP}^n$ (cf. \cite{Takeuchi}). 
This result has then been generalized by Ohsawa \cite{Ohsawa2007} in compact K\"ahler manifolds in dimension $\geq 3$ in which the non-existence was shown when the complement of the real analytic Levi flat hypersurface is assumed to be Stein for instance. 
Further constraints on the complex manifold $X$, in particular when $X$ is a surface,  are less known, for instance, the  (non-)existence of real analytic closed Levi flat hypersurface in $\mathbb{CP}^2$ is still an open problem (cf. \cite{Iordan-Matthey}). We refer the interested reader to \cite{OhsawaBook} for background and recent studies.

In the quest of non-existence theorems, the geometry of the complement of a compact Levi flat hypersurface sparks the interest. Brunella \cite{Brunella} turned the assumption on the complement to an assumption on the normal bundle $N_\mathcal{F}$ to the Levi foliation and proved the non-existence of real analytic closed Levi flat hypersurface whose normal bundle to the Levi foliation has positive curvature in compact K\"ahler manifolds of dimension $\geq 3$. 
Recently, Canales \cite{Canales} proved that, to have a normal bundle $N_\mathcal{F}$ of positive curvature, it is enough to assume the Levi foliation $\mathcal{F}$ to have chaotic dynamics or equivalently to not admit a transverse invariant measure in (abstract) compact Levi flat 3-manifolds \cite{Canales}*{Theorem 3.4}. 
Thanks to it, Canales deduced a dynamical property of Levi flat hypersurfaces:  
in complex algebraic surfaces, a real analytic closed Levi flat hypersurface admitting a transverse affine structure has a transverse invariant measure \cite{Canales}*{Theorem 6.8}. Combined with a theorem of Ghys \cite{Ghys91}*{Th\'eor\`eme 2.8}, whose proof works for (real) transversely affine foliations, such a Levi flat hypersurface contains a compact leaf or is defined by a closed 1-form. Her result echoes Cousin and Pereira's works \cite{Cousin-Pereira} on the classification of algebraic transversely affine foliations of codimension one.

\medskip
We are interested here in real analytic closed Levi flat hypersurfaces whose complement in a compact complex manifold is \emph{1-convex}, namely, admitting a plurisubharmonic exhaustion function which is strictly plurisubharmonic away from a compact subset. 
From a result of Ohsawa \cite{Ohsawa2007}*{Theorem 0.1}, such a Levi flat hypersurface may exist only in 
surfaces or non-K\"ahler manifolds of dimension $\geq 3$, and only a few examples are known:
Holomorphic disk bundles in certain ruled surfaces (cf. \cite{Ohsawa1982}, \cite{Diederich-Ohsawa}); Hyperbolic torus bundles in hyperbolic Inoue surfaces (cf. \cite{Zaffran}); Reeb foliations in Hopf manifolds of arbitrary dimension (cf. \cite{Nemirovski}).
The first examples live in K\"ahler surfaces but their Levi foliations are not transversely affine; 
In the second examples, their Levi foliations are transversely affine, but they are in non-K\"ahler manifolds (see \S\ref{sect:example} for details). 
The main result of this paper shows that the K\"ahler structure of the ambient surface and the transversely affine structure of the Levi foliation are mutually exclusive when the complement is 1-convex.

\begin{Theorem}
\label{thm:main}
Let $X$ be a compact K\"ahler surface and $M$ a real analytic closed Levi flat hypersurface whose complement is 1-convex. Then, the Levi foliation of $M$ cannot be transversely affine. 
\end{Theorem}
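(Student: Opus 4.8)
The plan is to translate the transverse affine structure into analytic data on the normal bundle $N_{\mathcal{F}}$ and to play that data against the two competing sources of curvature available in the hypotheses: the $1$-convexity of the complement and the K\"ahler form of $X$. First I would work on a neighbourhood $U$ of $M$, where by real analyticity the Levi foliation extends to a nonsingular holomorphic foliation $\mathcal{F}$ defined by a holomorphic $1$-form $\omega$. Following the description of Cousin and Pereira \cite{Cousin-Pereira}, the transverse affine structure is encoded by a closed $1$-form $\eta$ with $d\omega = \eta \wedge \omega$; since $\mathcal{F}$ is nonsingular along $M$, I expect $\eta$ to be holomorphic and pole-free on a (possibly smaller) neighbourhood of $M$. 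Flatness $d\eta = 0$ then says that $\nabla = d + \eta$ is a flat connection on $N_{\mathcal{F}}$, so that locally $e^{-\int \eta}\omega$ is a closed holomorphic defining form and the holonomy of $\mathcal{F}$ acts on the transversal by genuine affine maps.

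Next I would extract from $\eta$ a distinguished real-analytic metric $h$ on $N_{\mathcal{F}}$ along $M$, built from the modulus $|e^{\int \eta}|$ of the affine scaling. Its Chern curvature vanishes because $\Re\!\int\!\eta$ is pluriharmonic, so $N_{\mathcal{F}}$ is flat along $M$ and $c_1(N_{\mathcal{F}})$ is represented by a residue current supported on the polar locus of $\eta$, which is disjoint from $M$. On the other hand, the hypothesis that $X \setminus M$ is $1$-convex should force a definite sign on the Levi-type curvature of $N_{\mathcal{F}}$: by Grauert the complement is a proper modification of a Stein space contracting a compact analytic exceptional set $E$, and the strictly plurisubharmonic exhaustion away from $E$ controls the second fundamental form of the leaves near $M$. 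The step I would carry out here is to show that this pseudoconvexity, together with the vanishing of the Levi form on $M$, makes the flat metric $h$ propagate to a semipositively curved metric on $N_{\mathcal{F}}$, in the spirit of Brunella's normal bundle criterion \cite{Brunella}.

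Finally I would invoke the K\"ahler structure to close the argument. Integrating the closed form $\Re\eta$ against the K\"ahler form by Stokes over the regions cut out by $M$, and comparing with the semipositive representative of $c_1(N_{\mathcal{F}})$ obtained above, should force the curvature, hence the transverse scaling, to vanish identically. This places us in the translation (transversely Euclidean) case, in which $\mathcal{F}$ admits a transverse invariant measure and, by the theorem of Ghys quoted through Canales \cite{Canales}, \cite{Ghys91}, $M$ either contains a compact leaf or is defined by a global closed $1$-form. In either case $N_{\mathcal{F}}$ is numerically trivial: a compact leaf would be a compact curve of self-intersection zero, while the invariant measure rules out the negative self-intersection that the Remmert reduction of a $1$-convex complement would require on its exceptional set. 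A numerically trivial normal bundle cannot support the strict plurisubharmonicity demanded by $1$-convexity on a compact K\"ahler surface, and this is the contradiction.

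The main obstacle I anticipate is precisely this global step. The form $\eta$ and the metric $h$ are produced only near $M$, so I must propagate them across the exceptional set $E$, rule out spurious contributions from the polar divisor, and reconcile the \emph{real} transverse data living on $M$ with the \emph{holomorphic}, Hodge-theoretic data on all of $X$, all while keeping available the K\"ahler constraint that $b_1(X)$ is even. Controlling this extension and the attendant sign of the self-intersection, rather than the local affine bookkeeping, is where I expect the genuine difficulty to lie.
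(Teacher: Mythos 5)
Your first half runs parallel to the paper: the transverse affine structure does give an integrable (flat) holomorphic connection on $N_{\mathcal{F}}$ near $M$, and the goal of producing a metric on $N_{\mathcal{F}}$ that is flat along $M$, with $c_1$ pushed onto divisors away from $M$, is exactly the paper's intermediate target. But two of your steps are gaps rather than sketches. First, the extension problem you flag as ``the main obstacle'' is not optional bookkeeping; it is the technical heart, and you offer no mechanism for it. The paper extends the foliation to $X\setminus A$ by a Bochner--Hartogs argument, extends the integrable connection across $X\setminus A$ by a Hartogs-type lemma for closed holomorphic $1$-forms, passes to a Hironaka resolution so that $A$ becomes a simple normal crossing divisor $Z$, invokes the Deligne--Manin theorem \cite{Deligne} to extend to an integrable \emph{logarithmic} connection on a line bundle over the compact surface, and only then applies Ohtsuki's residue formula \cite{Ohtsuki} together with the K\"ahler $\pa\opa$-lemma to manufacture a smooth metric whose curvature is supported away from $M$. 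Without some version of this chain your ``residue current supported on the polar locus of $\eta$'' has no meaning, since $\eta$ is only defined near $M$. Second, the claim that $1$-convexity ``propagates'' the flat metric to a semipositively curved one in the spirit of Brunella is unsupported and runs in the wrong direction: Brunella's result \cite{Brunella} deduces convexity of the complement from positivity of $N_{\mathcal{F}}$, not conversely, and the paper uses nothing of this kind.

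The more serious defect is your endgame. After reducing to the case where the Levi foliation is defined by a closed $1$-form (which the paper obtains directly, by a Stokes computation on $M$ showing the leafwise logarithmic derivative of the flat metric vanishes, rather than via Ghys and Canales), you conclude with ``a numerically trivial normal bundle cannot support the strict plurisubharmonicity demanded by $1$-convexity.'' That is not a theorem, and no argument is given; numerical triviality of $N_{\mathcal{F}}$ along $M$ by itself contradicts nothing (the disk-bundle examples have Stein complement and topologically trivial data along $M$). The paper's actual contradiction is concrete and you are missing it: once the Levi foliation is defined by a closed $1$-form, the function $\rho = \sqrt{h_\alpha}\,u_\alpha$ is a well-defined defining function of $M$ whose level sets $M_\varepsilon=\rho^{-1}(\varepsilon)$ are smooth \emph{closed Levi flat hypersurfaces} inside $X\setminus M$ for small $\varepsilon\neq 0$; a strictly plurisubharmonic exhaustion of the $1$-convex complement would attain a maximum on a leaf of $M_\varepsilon$, violating the maximum principle. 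You need this displacement step (or a genuine substitute) to finish; as written, the proof does not close.
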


The main part of this theorem can be seen as a generalization of Canales' result \cite{Canales}*{Theorem 6.8} mentioned above.
Our contribution is to relax the assumption on the ambient manifold $X$ from algebraic to K\"ahler.
In \cite{Canales}, the projectivity of $X$ was used to extend the Levi foliation of $M$ to a  holomorphic foliation $\widetilde{\mathcal{F}}$ on entire $X$ \cite[Proposition 5.13]{Canales}, and to analyze transversely affine structure of the Levi foliation as degenerate transversely affine structure using a global meromorphic section of $N_{\widetilde{\mathcal{F}}}$  \cite[\S6.2.2 and \S6.3]{Canales}. 
To bypass these technical points, we apply the theory of integrable connection of vector bundles over complex manifolds (cf. \cite{Deligne}). 

Also, compared to \cite{Canales}*{Theorem 6.8}, we achieve a non-existence result instead of deducing a dynamical property. 
The main idea is to contradict the 1-convexity of the complement by showing the existence of Levi flat hypersurfaces close enough to the original one. In order to do that, we are going to build a closed 1-form defining the Levi foliation on $M$. 

\medskip
The first two sections are focused on preliminaries such as the notion of integrable connection in \S\ref{sect:connection}, and transversely affine foliations in \S\ref{sect:trans_affine}.
Theorem \ref{thm:main} is proved in \S\ref{sect:proof}. 
Some examples are provided in \S\ref{sect:example}.

\section{Integrable connections}
\label{sect:connection}

In this section, we briefly recall notions around integrable connections of holomorphic vector bundles over complex manifolds. 
We restrict ourselves to the case of line bundles, and refer the reader to \cites{Deligne,Malgrange,Ohtsuki} for backgrounds and details. 

Let $X$ be a complex manifold and $L$ a holomorphic line bundle over $X$.
We denote by $\mathcal{O}_X(L)$ the sheaf of germs of holomorphic sections of $L$. 
A \textit{holomorphic connection} $\nabla$ on $L$ is a $\C$-linear sheaf morphism
$$\nabla \colon \mathcal{O}_X(L)\to  \Omega^1_X \otimes_{\mathcal{O}_X} \mathcal{O}_X(L)$$
that satisfies the Leibniz rule $\nabla(fs)=df \otimes s+f \nabla s$ for any holomorphic function $f$ and holomorphic section $s$ of $L$ defined on a common open set. Here $\Omega^1_X$ denotes the sheaf of germs of holomorphic 1-form.

When we trivialize $L$ locally by a non-vanishing section $e_\alpha$ over an open set $U_\alpha$, the connection is expressed by a holomorphic 1-form $\gamma_\alpha$ on $U_\alpha$ such that if $s$ is a holomorphic section of $L$ over $U_\alpha$, then in the holomorphic trivialization
$\nabla s = (ds_\alpha + \gamma_\alpha s_\alpha) e_\alpha $ where $s = s_\alpha e_\alpha$.
Note that the connection 1-form $\gamma_\alpha$ transforms as $\gamma_{\beta}=\gamma_{\alpha}+ g_{\alpha\beta}^{-1} dg_{\alpha\beta}$
when we take another local trivialization $e_\beta$ over $U_\beta$ where $e_\beta = g_{\alpha\beta} e_\alpha$ for $g_{\alpha\beta} \in \mathcal{O}^*(U_\alpha \cap U_\beta)$. 
Conversely, a holomorphic connection on $L$ is given by the cocycle $(g_{\alpha\beta})$ representing $L$ with respect to an open covering $\{ U_\alpha \}$ and 
the collection of connection 1-forms $(\gamma_\alpha)$ enjoying the transformation rule.

\begin{Remark}\label{rem:connection_difference}
When two holomorphic connections $\nabla_1$ and $\nabla_2$ on $L$ are given, their difference $\nabla_1 - \nabla_2$ gives a well-defined holomorphic 1-form on $X$. Denote the connection forms of $\nabla_1$ and $\nabla_2$ by $(\gamma_{1, \alpha})$ and $(\gamma_{2, \alpha})$ respectively. Since they obey the same transformation rule,
$(\gamma_{1, \alpha} - \gamma_{2,\alpha})$ glue together and define a holomorphic 1-form on $X$.
\end{Remark}

The curvature $\Theta$ of $\nabla$ is a holomorphic 2-form such that for every locally defined holomorphic section $s$ of $L$, $\nabla^2s=\Theta \otimes s$, where $\nabla$ is extended on $L$-valued holomorphic forms by the Leibniz rule. 
Locally, $\Theta= d\gamma_\alpha$.
A \textit{flat} or \textit{integrable} connection $\nabla$ is one whose curvature vanishes identically, i.e.; $\Theta=0$.\\\medskip

Let $Z = \sum_{j=1}^N Z_j$ be a normal crossing divisor in $X$. By choosing a sufficiently fine coordinate covering $\lbrace{U_\beta}\rbrace$ of $X$, we may assume that each component $Z_j$ has a holomorphic defining function $f_{\beta j}$ on $U_\beta$.
We denote the support of a divisor $Z$ by $|Z|$. 
In \S\ref{sect:proof}, we shall use holomorphic connection on $L$ over $X \setminus |Z|$ having nice singularity along $Z$ in the following sense.

\begin{Definition}
A logarithmic connection $\nabla$ on $L$ along $Z$ is a $\C$-linear sheaf morphism
$$\nabla\colon \mathcal{O}_X(L)\to \Omega^1_X(\log Z)\otimes_{\mathcal{O}_X}  \mathcal{O}_X(L),$$
such that
$$\nabla(fs)=df\otimes s + f \nabla s,$$
for any local section $f\in\mathcal{O}_X$, $s\in\mathcal{O}_X(L)$ defined on a common open set, where $\Omega^1_X(\log Z)$ denotes the sheaf of germs of meromorphic 1-forms on $X$ with logarithmic poles along $Z$.\\\medskip
\end{Definition}

When we trivialize $L$ locally by a non-vanishing section $e_\alpha$ over an open set $U_\alpha$,  the logarithmic connection $\nabla$ is then expressed by a meromorphic 1-form $\widetilde{\gamma_\alpha}$ on $U_\alpha$ such that
$\nabla s = (ds_\alpha + \widetilde{\gamma_\alpha} s_\alpha) e_\alpha $ where $s = s_\alpha e_\alpha \in \mathcal{O}_X(L)$.
Along each component $Z_j$, $\widetilde{\gamma_\alpha}$ can be expressed as
\begin{align}\label{logconnection}
\widetilde{\gamma_\alpha}=a_{\alpha j} \dfrac{d f_{\alpha j}}{f_{\alpha j}} + b_{\alpha j},
\end{align}
where $a_{\alpha j}\in \mathcal{O}(U_\alpha)$, $b_{\alpha j}$ is a meromorphic $1$-form on $U_\alpha$ with
logarithmic pole along $\bigcup_{i\neq j}Z_i$ and $f_{\alpha j}$ a defining function of $Z_j$ on $U_\alpha$.
\begin{Definition}
Under the notations above, we define
$$\Res_{Z_j}(\nabla):= a_{\alpha j}\vert_{Z_j}.$$
This is a holomorphic function on $Z_j\cap U_\alpha$ that does not depend on the choice of $f_{\alpha j}$ nor on the representation \eqref{logconnection} of $\widetilde{\gamma_\alpha}$.
Hence, $\Res_{Z_j}(\nabla)$ defines a holomorphic, hence, constant function on $Z_j$, 
whose value we call the \emph{residue of $\nabla$ along $Z_j$}.
\end{Definition}

\medskip
Thanks to the following theorem, our consideration in \S\ref{sect:proof} will be reduced to the case of integrable logarithmic connections. 

\begin{Theorem}[{Deligne \cite{Deligne}*{Proposition II.5.4}} attributed to Manin, cf. {\cite{Malgrange}*{Theorem IV.4.4}}]
Let $X$ be a complex manifold and $Z$ a normal crossing divisor in $X$. 
Consider a holomorphic line bundle $L$ over $X \setminus |Z|$ with an integrable connection $\nabla$.
Then, there exists a holomorphic line bundle $\widetilde{L}$ over $X$ with an integrable logarithmic connection $\widetilde{\nabla}$ along $Z$ extending $\nabla$ on $L$.
\label{thm:deligne}
\end{Theorem}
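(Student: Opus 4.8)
The plan is to construct the extension $\widetilde L$ locally from the monodromy of the flat connection and then glue the local models using a canonical normalization of the residues. First I would reduce to a local model. Since $Z$ is a normal crossing divisor, every point of $|Z|$ has a coordinate polydisk $\D^n$ on which $Z=\{z_1\cdots z_k=0\}$, so that $(X\setminus|Z|)\cap \D^n=(\D^*)^k\times\D^{n-k}$. Integrability of $\nabla$ means the sheaf of flat sections of $L$ is a rank-one local system on $X\setminus|Z|$, and over the model the connection is determined by its monodromy representation $\pi_1((\D^*)^k\times\D^{n-k})\cong\Z^k\to\C^*$, that is, by the scalars $\lambda_j\in\C^*$ giving the monodromy around each component $Z_j$.

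Next I would build the local logarithmic extension. For each $j$ choose a residue $\alpha_j\in\C$ with $e^{2\pi i\alpha_j}=\lambda_j$, normalized once and for all by $0\le\Re\alpha_j<1$; this normalization is the device that will make the construction canonical. Picking a multivalued flat frame $s$ of $L$ on the punctured model, whose monodromy around the $j$-th meridian is $\lambda_j$, the section $\tilde e:=\bigl(\prod_{j=1}^k z_j^{-\alpha_j}\bigr)s$ is single-valued and nowhere vanishing on $(\D^*)^k\times\D^{n-k}$, so it defines a holomorphic frame of an extension of $L$ across $Z\cap\D^n$. A direct computation using $\nabla s=0$ gives $\nabla\tilde e=\gamma\otimes\tilde e$ with $\gamma=-\sum_{j=1}^k\alpha_j\,dz_j/z_j$, a logarithmic $1$-form along $Z$ with $\Res_{Z_j}\gamma=-\alpha_j$. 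Since $d\gamma=0$, the extended connection is integrable, so on each model we obtain an integrable logarithmic connection extending $\nabla$.

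Finally I would glue. The point is that the frame $\tilde e$, hence the extension and its connection, is independent of the auxiliary choices up to a holomorphic invertible factor: the multivalued frame $s$ is unique up to a nonzero constant, and the normalization $0\le\Re\alpha_j<1$ fixes the residues $\alpha_j$ uniquely from the monodromy $\lambda_j$, which is an intrinsic invariant of $\nabla$. Consequently, on the overlap of two charts the two frames $\tilde e$ differ by a function that is holomorphic and nowhere vanishing across $Z$, so the transition functions of the local extensions extend holomorphically and invertibly over $Z$ and define a global holomorphic line bundle $\widetilde L$ on $X$ carrying a global integrable logarithmic connection $\widetilde\nabla$ that restricts to $\nabla$ on $X\setminus|Z|$. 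I expect this gluing to be the main obstacle: one must check carefully that the canonical residue normalization is compatible across charts whose overlaps may straddle $Z$, and that the resulting cocycle indeed extends across $|Z|$ without poles or zeros. The integrability of $\nabla$ and the normal-crossing structure of $Z$ are exactly what make this compatibility hold.
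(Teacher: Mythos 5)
The paper gives no proof of this statement; it is quoted from Deligne \cite{Deligne}*{Proposition III.5.4} (cf.\ \cite{Malgrange}*{Theorem IV.4.4}), and your argument is precisely the standard canonical-extension construction found there: identify the local flat structure on $(\D^*)^k\times\D^{n-k}$ with its monodromy scalars, twist a multivalued flat frame by $\prod z_j^{-\alpha_j}$ with $e^{2\pi i\alpha_j}=\lambda_j$ and $0\le\Re\alpha_j<1$, and glue using the canonicity of that normalization. Your sketch is correct for line bundles (the only case the paper uses), including the one genuine point to verify, namely that changing the local defining equation $z_j\mapsto u_jz_j$ by a unit only alters the frame by the single-valued invertible factor $u_j^{-\alpha_j}$ on a suitably refined cover, so the transition functions extend invertibly across $|Z|$.
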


The following theorem gives a relationship between the residues of integrable logarithmic connection $\nabla$ of $L$ and the first Chern class of $L$. 
We will use this theorem in \S\ref{sect:proof} combining with the expression of the first Chern class of $L$ in terms of hermitian metrics of $L$.

\begin{Theorem}[Ohtsuki \cite{Ohtsuki}]
\label{thm:ohtsuki}
Let $L$ be a holomorphic line bundle over a compact complex manifold $X$ and $\nabla$ an integrable logarithmic connection of $L$ along a simple normal crossing divisor $Z = \sum_{j=1}^N Z_j$. 
Then, it holds that
$$c_1(L)=-\sum_{j=1}^N(\Res_{Z_j} \nabla)[Z_j] \quad \text{in $H^{1,1}(X)$}$$
where $c_1(L)$ is the first Chern class of $L$ and $[Z_j]$ is the current of integration along smooth hypersurface $Z_j$.
\end{Theorem}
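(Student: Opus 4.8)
The plan is to compute $c_1(L)$ by comparing the flat logarithmic connection $\nabla$ with the Chern connection of an auxiliary smooth Hermitian metric, reading off the residue contributions as currents of integration along the $Z_j$. Fix a trivializing cover $\{U_\alpha\}$ with transition cocycle $(g_{\alpha\beta})$ on which $\nabla$ is given by logarithmic connection $1$-forms $\gamma_\alpha\in\Omega^1_X(\log Z)(U_\alpha)$ satisfying $\gamma_\beta-\gamma_\alpha=g_{\alpha\beta}^{-1}dg_{\alpha\beta}$, and choose a smooth Hermitian metric $h=(h_\alpha)$ with $h_\beta=|g_{\alpha\beta}|^2h_\alpha$. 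Since $\pa\log|g_{\alpha\beta}|^2=g_{\alpha\beta}^{-1}dg_{\alpha\beta}$, the Chern connection forms $\pa\log h_\alpha$ obey the same transformation rule as the $\gamma_\alpha$. Hence the local $(1,0)$-forms $\eta_\alpha:=\gamma_\alpha-\pa\log h_\alpha$ agree on overlaps and glue to a global $(1,0)$-current $\eta$ on $X$ with logarithmic poles along $Z$; as logarithmic poles are $L^1_{\mathrm{loc}}$, the object $[\eta]$ is a well-defined degree-one current.

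Next I would compute the distributional exterior derivative $d[\eta]=d[\gamma_\alpha]-d[\pa\log h_\alpha]$ in two pieces. The smooth piece contributes $-[\opa\pa\log h_\alpha]$, the globally defined Chern curvature, which represents $c_1(L)$ via $c_1(L)=\tfrac{\sqrt{-1}}{2\pi}[\opa\pa\log h_\alpha]$. For the meromorphic piece I would invoke the residue (generalized Poincar\'e--Lelong) formula for a closed logarithmic $1$-form: integrability gives $d\gamma_\alpha=0$ as a meromorphic $2$-form, so the only defect between the current $d[\gamma_\alpha]$ and $[d\gamma_\alpha]=0$ is supported on $|Z|$, namely $d[\gamma_\alpha]=2\pi\sqrt{-1}\sum_j(\Res_{Z_j}\gamma_\alpha)[Z_j]$. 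The residues are globally well defined: each $g_{\alpha\beta}$ is holomorphic and nonvanishing across $Z$, so $g_{\alpha\beta}^{-1}dg_{\alpha\beta}$ has no pole and $\Res_{Z_j}\gamma_\alpha=\Res_{Z_j}\gamma_\beta$ on overlaps; moreover, writing $\gamma_\alpha=a\,dz_1/z_1+(\text{holomorphic})$ near $Z_j=\{z_1=0\}$, the vanishing of the polar part of $d\gamma_\alpha$ forces $a|_{Z_j}$ to have vanishing tangential derivatives, so it is constant on each connected $Z_j$ and equals the number $\Res_{Z_j}\nabla\in\C$. Assembling the two pieces yields the global identity of currents
\[
d[\eta]=2\pi\sqrt{-1}\sum_{j=1}^N(\Res_{Z_j}\nabla)[Z_j]-[\opa\pa\log h_\alpha].
\]

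Finally I would pass to de Rham cohomology with $\C$-coefficients, which is computed by currents, so that a $d$-exact current represents the zero class. Since $d[\eta]$ is exact it vanishes in $H^2(X;\C)$, giving $[\opa\pa\log h_\alpha]=2\pi\sqrt{-1}\sum_j(\Res_{Z_j}\nabla)[Z_j]$ there. Substituting the Chern--Weil representative produces
\[
c_1(L)=\tfrac{\sqrt{-1}}{2\pi}\,[\opa\pa\log h_\alpha]=\tfrac{\sqrt{-1}}{2\pi}\cdot 2\pi\sqrt{-1}\sum_{j=1}^N(\Res_{Z_j}\nabla)[Z_j]=-\sum_{j=1}^N(\Res_{Z_j}\nabla)[Z_j],
\]
which is the desired formula; compactness of $X$ is what guarantees that the $[Z_j]$ and $c_1(L)$ define genuine classes in $H^2(X;\C)$.

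I expect the main obstacle to be the residue current computation $d[\gamma_\alpha]=2\pi\sqrt{-1}\sum_j(\Res_{Z_j}\nabla)[Z_j]$ along the normal crossing divisor. Away from the crossings this reduces to the one-variable identity $d[dz/z]=2\pi\sqrt{-1}\,[\{z=0\}]$ tested against compactly supported forms; at the codimension-two crossings $Z_j\cap Z_k$ one must verify that no additional current mass appears, which follows from the local integrability of the logarithmic poles together with the fact that the singular support of the sum of residue currents sits in codimension two. Matching the global constants and signs between the Poincar\'e--Lelong and Chern--Weil normalizations to land exactly on the stated formula is the remaining bookkeeping.
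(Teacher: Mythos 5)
The paper offers no proof of this statement: Theorem \ref{thm:ohtsuki} is quoted as a black box from Ohtsuki's paper, so there is nothing internal to compare against. Judged on its own, your argument is correct and is in fact the classical route to this residue formula: compare the flat logarithmic connection with the Chern connection of an auxiliary smooth metric, observe that the difference $\eta_\alpha=\gamma_\alpha-\pa\log h_\alpha$ glues to a global $L^1_{\mathrm{loc}}$ current, and read off $c_1(L)$ from the exactness of $d[\eta]$ together with the Poincar\'e--Lelong identity $d[dz/z]=2\pi\sqrt{-1}\,[\{z=0\}]$. The sign bookkeeping checks out (e.g.\ for $\mathcal{O}(1)$ on $\mathbb{CP}^1$ with the connection making the canonical section of $\mathcal{O}(\{0\})$ flat, one has $\gamma_0=-dz/z$, residue $-1$, and the formula returns $c_1=[\{0\}]$), and your derivation that integrability forces the residue function $a|_{Z_j}$ to be locally constant is the right mechanism (on a compact $X$ one could alternatively invoke compactness of $Z_j$). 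The two points you flag as remaining are genuinely the only delicate ones and are standard: (i) the identity $d[\gamma]=[d\gamma]+2\pi\sqrt{-1}\sum_j(\Res_{Z_j}\gamma)[Z_j]$ for logarithmic $1$-forms along a simple normal crossing divisor, where at a crossing one writes $\gamma=a_1\,dz_1/z_1+a_2\,dz_2/z_2+(\text{holomorphic})$ and applies the one-variable computation in each factor via Fubini (no extra mass can concentrate on the codimension-two stratum); and (ii) the fact that the $(1,0)$-derivative contributes no residue term, which holds here because $\pa(1/z_1)\wedge dz_1=0$, so only holomorphic-coefficient $L^1_{\mathrm{loc}}$ terms survive. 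With those two lemmas made explicit, your proof is complete and matches the statement as used in \S\ref{sect:proof}.
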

Note that, in our notation, $c_1(L)$ and $[Z_j]$ stand for the Dolbeault cohomology classes of degree $(1,1)$. We will later regard them as de Rham cohomology classes, and use notation $c_1(L)_k$ and $[Z_j]_k$ when we work on $H^2(X; k)$, $k = \R$ or $\C$. 

\section{Transversely affine foliation}
\label{sect:trans_affine}

Transversely affine foliations are a particular case of transversely homogeneous foliations of codimension one. Such foliations are characterized by the existence of a transversely affine \textit{structure} that allows to describe the defining submersion of the foliation.

In this paper, a foliation can be singular, and we say \textit{smooth} or \textit{non-singular} foliations when we consider those without singularities. For transversely affine structure, we will always refer to \textit{non-degenerate} or \textit{regular} one (see Remark \ref{Remark} at the end of the section). 
So, we will omit this adjective unless otherwise stated.\\\medskip

We start this section by recalling the definition and a characterization of a real transversely affine foliation in a real manifold. We refer to \cite[Chap.III]{Godbillon} for details. 
 
\begin{Definition}
Let $M$ be a smooth manifold, and $\mathcal{F}$ a real codimension one smooth foliation of $M$. 
The foliation $\mathcal{F}$ of $M$ is said to be \emph{transversely affine} 
if $\mathcal{F}$ is given by a foliated atlas $\{(U_\alpha, (x_\alpha, t_\alpha))\}$ 
such that the change of transverse coordinates on non-empty $U_\alpha \cap U_\beta$ is given by an affine map
\[
t_\beta = a_{\alpha\beta} t_\alpha + b_{\alpha\beta}, \quad a_{\alpha\beta} \in \R^*, b_{\alpha\beta} \in \R.
\]
\end{Definition}

A transversely affine foliation is also characterized by a transversely affine structure that is given by differential forms.

\begin{Definition}
Let $M$ be a smooth manifold, and $\mathcal{F}$ a real codimension one smooth foliation of $M$. Assume that $\mathcal{F}$ is transversely oriented. 
A \emph{transversely affine structure} for $\mathcal{F}$ is given by a couple of 1-forms $(\omega,\eta)$ on $M$ such that
\begin{itemize}
\item $\omega$ is non-vanishing and defines $\mathcal{F}$;
\item $d\omega=\eta\wedge \omega$;
\item The form $\eta$ is $d$-closed.
\end{itemize}
Moreover, we say that two couples $(\omega,\eta)$ and $(\omega',\eta')$ are equivalent if there exists a non-vanishing smooth function $f$  on $M$ such that $\omega'=f\omega$ and $\eta'=\eta+\frac{df}{f}$.\\\medskip
\end{Definition}

\begin{Example}[Hyperbolic torus bundle, cf. \cite{Ghys-Sergiescu}*{\S1}]
\label{ex:hyp_torus}
A classical example of real transversely affine foliations is the model foliations on hyperbolic torus bundles. Let $\mathbb{T}^2=\R^2/\mathbb{Z}^2$ and $A\in SL(2,\mathbb{Z})$. Denote by $\mathbb{T}^3_A=\R \times \mathbb{T}^2/\thicksim$ a torus bundle over the circle where 
$(t,p)\thicksim(t+1,Ap)$ for all $(t,p)\in \R \times \mathbb{T}^2 $. We say that $\mathbb{T}^3_A$ is \emph{hyperbolic} if $\vert \tr A \vert >2$.

Assume, for simplicity, that $\tr A>2$. The automorphism $A$ on $\mathbb{T}^2$ has 2 eigenvectors with irrational slopes. 
We obtain a linear foliation on $\mathbb{T}^2$ from the foliation of $\mathbb{R}^2$ by parallel lines directed by one of these eigenvectors and by passing to the quotient.
Taking product with $\mathbb{R}$, a foliation by parallel planes on $\R \times \mathbb{T}^2$ is induced. This foliation on $\R \times \mathbb{T}^2$ is invariant by the equivalence $\thicksim$, and gives a foliation by cylinders on $\mathbb{T}^3_A$, that is called a \emph{model foliation}. Hence, there are 2 foliations on $\mathbb{T}^3_A$, one for each eigenvectors.

These foliations are transversely affine (\cite{Ghys-Sergiescu}*{Proposition 3}):
We consider the foliation associated to an eigenvector $e_1$ of the matrix $A$ with the eigenvalue $\lambda > 1$. 
Denote by $e_2$ another eigenvector with the eigenvalue $\lambda^{-1}$. 
Let $(t, (x,y))$ be the coordinates on $\R \times \R^2$ with $\mathbb{R}^2=\lbrace{xe_1+ye_2\mid x,y\in\mathbb{R}}\rbrace$.
The transversely affine structure $(\omega, \eta)$ of the foliation is then given by $\omega=\lambda^tdy$ and $\eta= \log \lambda dt$. Both forms pass to quotient on $\mathbb{T}^3_A$ and verify $d\omega=\eta\wedge \omega$ and $d\eta=0$.\\\medskip
\end{Example}

Below, we recall an equivalent definition for transversely affine holomorphic foliations in a complex manifold.

\begin{Definition}\label{def transAffH}
Let $X$ be a complex manifold and $\mathcal{F}$ a codimension one non-singular holomorphic foliation on $X$. We say that $\mathcal{F}$ is \emph{transversely affine} if there exists a 
foliated atlas $\{(U_\alpha, (z_\alpha, w_\alpha))\}$ of $\mathcal{F}$ such that on non-empty $U_\alpha\cap U_\beta$ the change of transverse coordinates is given by
\[
w_\beta = a_{\alpha\beta} w_\alpha + b_{\alpha\beta}, \quad a_{\alpha\beta} \in \C^*, b_{\alpha\beta} \in \C.
\]
\end{Definition}
A \textit{singular} holomorphic foliation $\mathcal{F}$ on a complex manifold $X$ is said to be \emph{transversely affine} if it is transversely affine on $X\setminus Sing(\mathcal{F})$ where $Sing(\mathcal{F})$ is the set of singularities of $\mathcal{F}$.\\\medskip

We also have a characterization in terms of transversely affine structure as for real foliations. 

\begin{Proposition}[Sc\'ardua {\cite[Proposition 6.1]{Scardua}}]
\label{prop:scardua}
Let $\mathcal{F}$ be a holomorphic singular codimension one foliation on a complex manifold $X$. The foliation $\mathcal{F}$ is transversely affine on $X$ if and only if there exist an open cover $\{ U_\alpha \}$ and a collection of holomorphic 1-forms $(\omega_\alpha,\eta_\alpha)$ on $U_\alpha$ where 
\begin{itemize}
\item $\omega_\alpha$ defines $\mathcal{F}$ on $U_\alpha$;
\item $d\omega_\alpha =\eta_\alpha \wedge \omega_\alpha$;
\item The form $\eta_\alpha$ is $d$-closed;
\item If $U_\alpha\cap U_\beta\neq \emptyset$, then $\eta_{\alpha}=\eta_\beta+ g_{\alpha\beta}^{-1} dg_{\alpha\beta}$
where $g_{\alpha\beta}\in\mathcal{O}^*(U_\alpha\cap U_\beta)$ satisfies $\omega_{\alpha}=g_{\alpha\beta}\omega_{\beta}$ on $U_\alpha\cap U_\beta$.
\end{itemize}
Moreover, two collections $(\omega_\alpha',\eta_\alpha')$ and $(\omega_\alpha,\eta_\alpha)$ define the same affine transverse structure for $\mathcal{F}$ if and only if 
there exists a collection of $h_\alpha\in\mathcal{O}^*(U_\alpha)$ such that $\omega_\alpha'=h_\alpha\omega_\alpha$ and $\eta_\alpha'=\eta_\alpha+ h_\alpha^{-1} dh_\alpha$.\\\medskip
\end{Proposition}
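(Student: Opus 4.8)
The plan is to prove both implications by passing between the affine foliated atlas of Definition~\ref{def transAffH} and the pair of holomorphic $1$-forms, using local primitives as integrating factors. Throughout I would first refine the cover $\{U_\alpha\}$ to a good cover, so that every $U_\alpha$ and every nonempty $U_\alpha\cap U_\beta$ is connected and simply connected; this is what allows me to take logarithms of the transition functions and primitives of closed holomorphic forms.

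For the implication from the atlas to the forms, the foliation is cut out on $U_\alpha$ by the transverse submersion $w_\alpha$, so I would simply set $\omega_\alpha=dw_\alpha$ and $\eta_\alpha=0$. Then $d\omega_\alpha=0=\eta_\alpha\wedge\omega_\alpha$ and $\eta_\alpha$ is trivially closed. Since $w_\beta=a_{\alpha\beta}w_\alpha+b_{\alpha\beta}$ with $a_{\alpha\beta}\in\C^*$, $b_{\alpha\beta}\in\C$ \emph{constant} on each connected overlap, differentiating gives $\omega_\beta=a_{\alpha\beta}\omega_\alpha$, hence $\omega_\alpha=g_{\alpha\beta}\omega_\beta$ with $g_{\alpha\beta}=a_{\alpha\beta}^{-1}$ constant, so that $g_{\alpha\beta}^{-1}dg_{\alpha\beta}=0=\eta_\alpha-\eta_\beta$ and all four conditions hold.

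The substantive direction is the converse. Given $(\omega_\alpha,\eta_\alpha)$, the idea is to use the integrating factor $e^{-h_\alpha}$, where $h_\alpha$ is a local primitive of the closed form $\eta_\alpha$ on the simply connected $U_\alpha$. The computation $d(e^{-h_\alpha}\omega_\alpha)=e^{-h_\alpha}(-\eta_\alpha\wedge\omega_\alpha+d\omega_\alpha)=0$ shows that $e^{-h_\alpha}\omega_\alpha$ is a closed nonvanishing holomorphic $1$-form defining $\mathcal{F}$ on $U_\alpha$; by simple connectedness it is exact, $e^{-h_\alpha}\omega_\alpha=dw_\alpha$, and $w_\alpha$ is then a holomorphic first integral, i.e. a transverse coordinate. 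To extract the transition rule I would integrate the overlap identity $\eta_\alpha=\eta_\beta+g_{\alpha\beta}^{-1}dg_{\alpha\beta}$ to get $h_\alpha=h_\beta+\log g_{\alpha\beta}+c_{\alpha\beta}$ with $c_{\alpha\beta}\in\C$ locally constant, whence, after cancelling $g_{\alpha\beta}$ via $\omega_\alpha=g_{\alpha\beta}\omega_\beta$, one finds $dw_\alpha=e^{-c_{\alpha\beta}}\,dw_\beta$. Integrating once more yields $w_\alpha=e^{-c_{\alpha\beta}}w_\beta+\text{const}$ on each connected overlap: the transverse coordinate changes are affine with \emph{constant} coefficients, which is precisely Definition~\ref{def transAffH}.

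Finally, for the equivalence statement I would verify directly that replacing $(\omega_\alpha,\eta_\alpha)$ by $(h_\alpha\omega_\alpha,\ \eta_\alpha+h_\alpha^{-1}dh_\alpha)$ preserves all four conditions: the identity $d(h_\alpha\omega_\alpha)=(\eta_\alpha+h_\alpha^{-1}dh_\alpha)\wedge(h_\alpha\omega_\alpha)$ is immediate, $\eta_\alpha+h_\alpha^{-1}dh_\alpha$ stays closed, and the cocycle $g_{\alpha\beta}$ is merely multiplied by $h_\alpha h_\beta^{-1}$, so the constructed transverse coordinates change only by an affine substitution and the affine structure is unchanged. Conversely, two collections defining the same structure cut out the same $\mathcal{F}$, so $\omega_\alpha'=h_\alpha\omega_\alpha$ for some $h_\alpha\in\mathcal{O}^*(U_\alpha)$, and comparing $d\omega_\alpha'=\eta_\alpha'\wedge\omega_\alpha'$ with $d(h_\alpha\omega_\alpha)$, together with closedness and the overlap condition, forces $\eta_\alpha'=\eta_\alpha+h_\alpha^{-1}dh_\alpha$. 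The main obstacle I anticipate is the bookkeeping in the converse: one must ensure the primitives $h_\alpha$ and the branches $\log g_{\alpha\beta}$ are single-valued (hence the passage to a good cover) and, more delicately, that the integration constant $c_{\alpha\beta}$ is genuinely locally constant, since it is exactly this constant that supplies the multiplicative coefficient $a_{\alpha\beta}$ of the affine holonomy; the non-uniqueness of $\eta_\alpha$ modulo $\mathcal{O}\cdot\omega_\alpha$ must likewise be tracked carefully in the equivalence part.
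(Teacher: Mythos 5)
The paper does not prove this proposition: it is quoted verbatim from Sc\'ardua \cite{Scardua}*{Proposition 6.1} with a citation and no argument, so there is no in-paper proof to compare against. Your integrating-factor argument is the standard route to this result and is essentially correct. The forward direction (take $\omega_\alpha=dw_\alpha$, $\eta_\alpha=0$, $g_{\alpha\beta}=a_{\alpha\beta}^{-1}$) is fine, and in the converse the computation $d(e^{-h_\alpha}\omega_\alpha)=0$, the exactness $e^{-h_\alpha}\omega_\alpha=dw_\alpha$ on a simply connected chart, and the identity $dw_\alpha=e^{-c_{\alpha\beta}}dw_\beta$ obtained by integrating the cocycle relation are exactly the right steps; the constancy of $c_{\alpha\beta}$ on connected overlaps, which you single out as the crux, does indeed come for free once the cover is good.

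The one place where your sketch stops short of a proof is the ``only if'' half of the equivalence statement. Comparing $d\omega_\alpha'=\eta_\alpha'\wedge\omega_\alpha'$ with $d(h_\alpha\omega_\alpha)$ only yields $\eta_\alpha'=\eta_\alpha+h_\alpha^{-1}dh_\alpha+f_\alpha\omega_\alpha$ for some holomorphic $f_\alpha$, and closedness of both sides does not by itself force $f_\alpha=0$ (it only gives $(df_\alpha+f_\alpha\eta_\alpha)\wedge\omega_\alpha=0$). You flag this but do not close it. The missing step is to use the hypothesis that the two collections define the \emph{same} affine structure: a primitive $F_\alpha$ of the closed form $f_\alpha\omega_\alpha$ is constant on leaves, hence a function of $w_\alpha$ alone, and the transverse coordinate built from $(\omega_\alpha',\eta_\alpha')$ satisfies $dw_\alpha'=(\text{const})\,e^{-F_\alpha}\,dw_\alpha$; this is an affine function of $w_\alpha$ only when $F_\alpha$ is constant, i.e.\ $f_\alpha=0$. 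With that addition (and avoiding the notational clash of using $h_\alpha$ both for the primitive of $\eta_\alpha$ and for the unit in the equivalence part), your argument is complete.
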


We will use the fact that a transversely affine structure of $\mathcal{F}$ induces an integrable connection on the normal bundle $N_\mathcal{F}$ in \S\ref{sect:proof}.
Let us recall first the relationship between defining 1-forms of the foliation and its normal bundle.
Let $\mathcal{F}$ be a codimension one holomorphic foliation on a complex manifold $X$ with normal bundle $N_\mathcal{F}$. 
Such a foliation is given locally by an open covering $\lbrace U_\alpha\rbrace$ of $X$ 
and a collection of holomorphic 1-forms $\omega_\alpha$ defining $\mathcal{F}$ on $U_\alpha$ and patched by $\omega_{\alpha}=g_{\alpha\beta}\omega_{\beta}$ on $U_\alpha\cap U_\beta$, 
where $g_{\alpha\beta}\in\mathcal{O}^*(U_\alpha\cap U_\beta)$. 
The normal bundle $N_{\mathcal{F}}$ is represented by the cocycle $(g_{\alpha\beta})$. 
We remark that the normal bundle $N_{\mathcal F}$ is well-defined as a holomorphic line bundle
for \emph{singular} holomorphic foliations of codimension one (cf. \cite{OhsawaBook}*{\S5.1.1}).

\begin{Lemma}
\label{lem:transAff to connection}
Let $\mathcal{F}$ be a codimension one holomorphic foliation on a complex manifold $X$ with normal bundle $N_\mathcal{F}$. 
Assume that $\mathcal{F}$ is transversely affine. Then, there exists an integrable connection $\nabla$ on $N_\mathcal{F}$. 
\end{Lemma}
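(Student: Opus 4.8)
The plan is to read a connection directly off the transverse affine data supplied by Proposition~\ref{prop:scardua}, and to obtain flatness for free from the closedness of the forms $\eta_\alpha$. First I would apply Proposition~\ref{prop:scardua} to the transversely affine foliation $\mathcal{F}$, obtaining an open cover $\{U_\alpha\}$, holomorphic defining $1$-forms $\omega_\alpha$ with $\omega_\alpha = g_{\alpha\beta}\omega_\beta$ on overlaps (so that $(g_{\alpha\beta})$ is precisely the cocycle representing $N_\mathcal{F}$), together with holomorphic $1$-forms $\eta_\alpha$ satisfying $d\omega_\alpha = \eta_\alpha\wedge\omega_\alpha$, $d\eta_\alpha = 0$, and the gluing relation $\eta_\alpha = \eta_\beta + g_{\alpha\beta}^{-1}\,dg_{\alpha\beta}$.

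The key observation is that, up to a sign, the collection $(\eta_\alpha)$ already obeys the transformation rule for connection $1$-forms recalled in \S\ref{sect:connection}. Concretely, I would set $\gamma_\alpha := -\eta_\alpha$ on each $U_\alpha$. Rewriting the gluing relation as $\eta_\beta = \eta_\alpha - g_{\alpha\beta}^{-1}\,dg_{\alpha\beta}$ and negating yields $\gamma_\beta = \gamma_\alpha + g_{\alpha\beta}^{-1}\,dg_{\alpha\beta}$, which is exactly the rule required of connection $1$-forms for the cocycle $(g_{\alpha\beta})$ representing $N_\mathcal{F}$. Since the $\eta_\alpha$ are holomorphic, so are the $\gamma_\alpha$, and by the converse statement recalled in \S\ref{sect:connection} the data $\big((g_{\alpha\beta}),(\gamma_\alpha)\big)$ defines a holomorphic connection $\nabla$ on $N_\mathcal{F}$. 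Integrability is then immediate: the curvature is computed locally by $\Theta = d\gamma_\alpha = -d\eta_\alpha$, and each $\eta_\alpha$ is $d$-closed, so $\Theta = 0$ identically and $\nabla$ is flat.

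I do not expect a deep obstacle, as the content is entirely the matching of two transformation rules. The one point that requires care is the bookkeeping of signs and index order between the connection convention of \S\ref{sect:connection} and the gluing convention for $(\omega_\alpha,\eta_\alpha)$ in Proposition~\ref{prop:scardua}; the sign in the assignment $\gamma_\alpha := -\eta_\alpha$ is dictated by this comparison, and had $N_\mathcal{F}$ been presented by the cocycle $(g_{\alpha\beta}^{-1})$ one would instead take $\gamma_\alpha := \eta_\alpha$. Although not needed for the existence claim, I would also remark that replacing $(\omega_\alpha,\eta_\alpha)$ by an equivalent transverse structure $\omega_\alpha' = h_\alpha\omega_\alpha$, $\eta_\alpha' = \eta_\alpha + h_\alpha^{-1}\,dh_\alpha$ changes $\gamma_\alpha$ by $-h_\alpha^{-1}\,dh_\alpha$, i.e.\ only by a gauge transformation, so that the flat connection is canonically attached to the transverse affine structure of $\mathcal{F}$.
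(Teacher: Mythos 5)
Your proposal is correct and follows essentially the same route as the paper: invoke Proposition~\ref{prop:scardua}, set $\gamma_\alpha := -\eta_\alpha$, match the gluing relation with the connection transformation rule for the cocycle $(g_{\alpha\beta})$ of $N_\mathcal{F}$, and conclude flatness from $d\eta_\alpha = 0$. The extra remarks on sign conventions and gauge equivalence are consistent with the paper's conventions and only add detail.
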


\begin{proof}
From Proposition \ref{prop:scardua}, we have an open cover $\{U_\alpha\}$ and 
the collection of $(\omega_\alpha,\eta_\alpha)$ of holomorphic 1-forms on $U_\alpha$. 
As explained above, $N_\mathcal{F}$ is represented by the cocycle $(g_{\alpha\beta})$ 
where $\omega_{\alpha}=g_{\alpha\beta}\omega_{\beta}$, $g_{\alpha\beta}\in\mathcal{O}^*(U_\alpha\cap U_\beta)$. 
From the transformation rule $\eta_{\alpha}=\eta_\beta+ g_{\alpha\beta}^{-1} dg_{\alpha\beta}$, we see that the collection of $\gamma_\alpha := -\eta_\alpha$ defines a holomorphic connection $\nabla$ on $N_\mathcal{F}$.
Since each $\eta_\alpha$ is a closed 1-form, the curvature of $\nabla$ is $\Theta=d\gamma_\alpha=0$ on any $U_\alpha$.  
Hence, $N_\mathcal{F}$ has an integrable connection $\nabla$ over $X$.
\end{proof}

\section{Proof of Theorem \ref{thm:main}}
\label{sect:proof}
\begin{proof}[Proof of Theorem \ref{thm:main}]
The proof is by contradiction. Suppose that the Levi foliation of $M$ is transversely affine as a real analytic foliation.
We may assume that $M$ is connected by choosing its connected component, 
and also that $M$ is oriented by taking a suitable double cover of $X$.

First, we shall use the 1-convexity of the complement of $M$. 
Applying theorems of Grauert and of Narasimhan (see, for instance, \cite{Peternell}) to the Remmert reduction of $X \setminus M$, we obtain a real analytic plurisubharmonic exhaustion $\varphi$ of $X \setminus M$ which is strictly plurisubharmonic away from the maximal compact analytic set $A \subset X \setminus M$. 
We may perturb $\varphi$ in Whitney $C^2$-topology to be a smooth proper strictly plurisubharmonic Morse function $\widetilde{\varphi} \colon X \setminus (M \cup A) \to (m, +\infty)$ where $m := \inf \widetilde{\varphi}$.
Using this function $\widetilde{\varphi}$, we can extend the Levi foliation of $M$ to a possibly singular holomorphic foliation $\mathcal{F}$ in $X \setminus A$ (see a Bochner--Hartogs type extension theorem in \cite{Canales}*{\S5} or \cite{Ivashkovich}*{\S4.3}).
By Hironaka's desingularization theorem, we have another compact K\"ahler surface $Y$, a proper holomorphic map $\pi \colon Y \to X$ and a simple normal crossing divisor $Z = \sum_{j=1}^N Z_j$ of $Y$ such that $\pi$ is a biholomorphism from $Y \setminus |Z|$ to $X \setminus A$.
We identify $M \simeq \pi^{-1}(M) \subset Y$ and regard $\mathcal{F}$ as a holomorphic foliation on $Y \setminus |Z|$ via $\pi$.

From the real analyticity of $M$, the extension of Levi foliation $\mathcal{F}$ is transversely affine on some small neighborhood $W$ of $M$ (see \cite{Canales}*{Proposition 6.3}). 
More precisely, we may find a foliated atlas $\{ (U_\alpha, (z_\alpha, w_\alpha = t_\alpha + iu_\alpha))\}$ of $\mathcal{F}_{|W}$ such that
\begin{itemize}
\item The foliation $\mathcal{F}$ is given by $dw_\alpha$ on $U_\alpha$;
\item The foliated chart $(z_\alpha, w_\alpha)$ maps $U_\alpha$ to $\D^2$ biholomorphically, and $M \cap U_\alpha$ to $\D \times I$; 
\item On $U_\alpha \cap U_\beta$, the change of transverse coordinates is of the form
\begin{equation}
\label{eq:affine}
w_\beta = a_{\alpha\beta} w_\alpha + b_{\alpha\beta}, \quad a_{\alpha\beta} > 0, b_{\alpha\beta} \in \R. \tag{A}
\end{equation}
Namely,
\[
t_\beta = a_{\alpha\beta} t_\alpha + b_{\alpha\beta}, \quad
u_\beta = a_{\alpha\beta} u_\alpha.
\]
\end{itemize}
Here we wrote $\D := \{ z \in \C \mid |z| < 1\}$ and $I := \{ x \in \R \mid |x| < 1\}$.

Next, we shall construct an integrable logarithmic connection on an extension of the normal bundle $N_{\mathcal{F}}$ over $Y$.
By Lemma \ref{lem:transAff to connection},
the normal bundle $N_{\mathcal{F}_{|_W}}$ admits an integrable connection.
Notice that integrable connections of a fixed vector bundle verify the following two properties:
\begin{enumerate}
\item (Uniqueness property) For any integrable connections $\nabla_1$, $\nabla_2$  of $N_{\mathcal{F}}$ over a connected open subset $U \subset Y \setminus |Z|$, 
$\nabla_1 = \nabla_2$ over $U$ if $\nabla_1 = \nabla_2$ over some non-empty open subset $V \subset U$.  
\item (1-Hartogs-type extension lemma) Let $i \colon \D^2 \to Y\setminus |Z|$ be a holomorphic embedding of the bidisk and denote by $H_\varepsilon := \{ (z,w) \in \D^2 \mid \text{$|z| < \varepsilon$ or $|w| > 1 - \varepsilon$} \}$ the Hartogs figure. Then, any integrable connection $\nabla$ of $N_{\mathcal{F}}$ over $i(H_\varepsilon)$ extends to $i(\D^2)$. 
\end{enumerate}
The first point is reduced to the identity theorem for holomorphic 1-forms since the difference $\nabla_1 - \nabla_2$ is a closed holomorphic 1-form over $U$ (see Remark \ref{rem:connection_difference}).
The second point is also reduced to the Hartogs extension theorem for holomorphic 1-forms since $N_{\mathcal{F}}$ is holomorphically trivial over $i(\D^2)$ and integrable connections are identified with closed holomorphic 1-forms. 
Hence, we can apply the Bochner--Hartogs type extension theorem (cf. \cite{Ivashkovich}*{\S4.3}) to the integrable connection of $N_{\mathcal{F}}$ over $W$ and extend it to $Y \setminus |Z|$.
Theorem \ref{thm:deligne} yields a holomorphic line bundle $L$ over $Y$ which extends $N_\mathcal{F}$ and an integrable logarithmic connection $\nabla$ of $L$ along $Z$. 

We shall construct a smooth hermitian metric of $N_\mathcal{F}$ which is flat around $M$
by exploiting the identity in Theorem \ref{thm:ohtsuki}.
Since $Y$ is K\"ahler, the identity holds in $H^2(Y; \C)$ via the Hodge decomposition $H^2(Y;\C) \simeq H^{2,0}(Y) \oplus  H^{1,1}(Y) \oplus  H^{0,2}(Y)$:
\[
c_1(L)_\C = - \sum_{j=1}^N (\Res_{Z_j} \nabla) [Z_j]_\C \quad \text{in $H^{2}(Y;\C)$.}
\]
Since both $c_1(L)_\C$ and $[Z_j]_\C$ can be represented by real classes, we have
\[
c_1(L)_\R = - \sum_{j=1}^N \Re(\Res_{Z_j} \nabla) [Z_j]_\R \quad \text{in $H^{2}(Y;\R)$.}
\]

Fix an arbitrary smooth hermitian metric $h_0$ of $L$. 
Also, for each $j$, we take smooth hermitian metric $h_j$ of the line bundle associated with $Z_j$ so that the support of the curvature form $\Theta_{h_j}$ does not intersect with the neighborhood $W$ of $M$. 
Since $Y$ is K\"ahler, we may use the $\pa\opa$-lemma to obtain a smooth real function $\psi$ on $Y$  such that 
\[
\frac{i}{2\pi}\Theta_{h_0} = - \sum_{j=1}^N  \Re(\Res_{Z_j} \nabla) \frac{i}{2\pi}\Theta_{h_j} + i\pa\opa \psi.
\]
Letting $h := h_0 e^{2\pi\psi}$, we obtain a smooth hermitian metric $h$ of $L$ 
whose curvature form
\[
\Theta_h =  - \sum_{j=1}^N  \Re(\Res_{Z_j} \nabla) \Theta_{h_j} 
\]
vanishes away from the supports of $\Theta_{h_j}$. 
Since $L$ is isomorphic to $N_{\mathcal{F}}$ over $Y \setminus |Z|$, 
the restriction of $h$ yields a flat hermitian metric of $N_{\mathcal{F}}$ over $W$.

Using this flat metric $h$, we shall show that the Levi foliation of $M$ is defined by a closed 1-form on $M$.  
We write $h(\frac{\pa}{\pa w_\alpha}, \frac{\pa}{\pa w_\alpha}) = h_\alpha$ in each foliated chart $U_\alpha$ of $\mathcal{F}_{|_W}$.
Consider a smooth 1-form $\omega := \sqrt{h_\alpha(z_\alpha, t_\alpha)} dt_{\alpha}$ on $M$ defining the Levi foliation, and leafwise $(1,0)$-form on $M$
\[
\eta := \frac{\pa (\log \sqrt{h_\alpha(z_\alpha, t_\alpha)})}{\pa z_\alpha} dz_\alpha.
\]
Note that they are well-defined on $M$ since (\ref{eq:affine}) implies
\[
h_\alpha = h_\beta \left|\frac{dw_\beta}{dw_\alpha} \right|^2 = h_\beta a_{\alpha\beta}^2
\]
on $U_\alpha \cap U_\beta$, and they enjoy
\[
d\omega = \left(\frac{\pa \sqrt{h_\alpha(z_\alpha, t_\alpha)}}{\pa z_\alpha} dz_\alpha +
\frac{\pa \sqrt{h_\alpha(z_\alpha, t_\alpha)}}{\pa \ol{z}_\alpha} d\ol{z}_\alpha\right) \wedge dt_\alpha
= (\eta + \ol{\eta}) \wedge \omega.
\]
Since $\Theta_h = 0$, Stokes' formula yields
\begin{align*}
0 = \int_M d(\eta  \wedge \omega) &= \int_M  (d\eta\wedge \omega - \eta\wedge d\omega)\\
&=\int_M(\partial\bar\partial \log(\sqrt{h_\alpha})\wedge \omega - \eta\wedge d\omega)\\
&=\int_M \left(-\frac{1}{2} \Theta_h - \eta \wedge ((\eta + \ol{\eta})\wedge \omega)\right)\\
&=\int_M \left(-\frac{1}{2} \Theta_h - \eta \wedge \ol{\eta} \right)\wedge \omega \\
&= -\int_M \eta \wedge \ol{\eta} \wedge \omega.
\end{align*}
This implies that $\eta = 0$ on $M$, hence, $d\omega = 0$ on $M$.

Finally, we shall construct a closed Levi flat hypersurface in $X \setminus M$ by displacing $M$ using the closed 1-form $\omega$. 
Note that, on each foliated chart $U_\alpha$, $h_\alpha$ depends only on $t_\alpha$ since $\omega$ is $d$-closed. 
Let $\rho \colon W \to \R $ be defined by $\rho(z_\alpha, w_\alpha) := \sqrt{h_\alpha(t_\alpha)} u_\alpha$ on $U_\alpha$.
This is a well-defined defining function of $M$ due to (\ref{eq:affine}).
Hence,  $M_\varepsilon := \rho^{-1}(\varepsilon)$ 
are smooth closed real hypersurfaces for $|\varepsilon| \ll 1$.
Moreover, $M_\varepsilon$ is Levi flat since $M_\varepsilon \cap U_\alpha$ is foliated by $\D \times \{ w_\alpha \}$.
This existence of smooth closed Levi flat hypersurface contradicts the 1-convexity of $X \setminus M$ due to the maximum principle of strictly plurisubharmonic function, and completes the proof.
\end{proof}

\begin{Remark}\label{Remark}
When $X$ is algebraic, the Levi foliation of $M$ extends to not only outside of the maximal compact analytic set $A$ but also across $A$ as shown in \cite{Canales},
and the transversely affine structure extends over $X$ as \textit{degenerate} transversely affine structure.
Degenerate transversely affine structure was first studied by Sc\'ardua \cite{Scardua} and defined by Cousin and Pereira in \cite{Cousin-Pereira}. 
A \emph{degenerate} transversely affine structure of a singular holomorphic foliation $\mathcal{F}$ is 
a meromorphic flat connection
$$\nabla\colon \mathcal{O}_X(N_\mathcal{F}) \rightarrow \mathcal{O}_X(N_\mathcal{F}) \otimes \Omega^1_X(*D)\quad \text{such that $\nabla \omega=0$}$$
where $\omega$ is a defining holomorphic $1$-form of $\mathcal{F}$ valued in $N_{\mathcal F}$ and $D$ is a reduced divisor on $X$. $\Omega^1_X(*D)$ denotes the sheaf of meromorphic $1$-forms on $X$ with poles along $D$. See \cite{Cousin-Pereira} for more details. Although this notion was originally called \textit{singular} transversely affine structure in \cite{Cousin-Pereira}, we are following the terminology introduced in \cite{Canales}, that is, \textit{degenerate} transversely affine structure, to avoid confusion with a singular foliation.
\end{Remark}

\section{Examples}
\label{sect:example}
We collect well-known examples of Levi flat hypersurfaces to illustrate our non-existence result.
The first example shows that a compact \emph{non-K\"ahler} surface may contain 
a real analytic closed Levi flat hypersurface 
whose complement is 1-convex and Levi foliation is transversely affine.

\begin{Example}[Hyperbolic Inoue surfaces, cf. \cite{Zaffran}*{p.401}]
Let $A \in SL(2,\Z)$ with $\tr A > 2$. We assume that $A$ is an even Dloussky matrix, that is, for some positive integer $n$, there exist positive integers $k_1, k_2, \dots, k_{2n}$ such that
\[
A = \begin{pmatrix} 0 & 1 \\ 1 & k_1 \end{pmatrix}  \begin{pmatrix} 0 & 1 \\ 1 & k_2 \end{pmatrix} \dots  \begin{pmatrix} 0 & 1 \\ 1 & k_{2n} \end{pmatrix}.
\]
Denote by $e_1, e_2 \in \R^2$ the eigenvectors corresponding to 
the eigenvalues $\lambda > 1$ and $\lambda^{-1}$ of the matrix $A$ as in Example \ref{ex:hyp_torus}.
Regard $e_1, e_2 \in \C^2 = \C \otimes_\R \R^2$, and consider a tube domain $T := \mathbf{H} e_1 \oplus \C e_2 \subset \C^2$, where $\mathbf{H} = \{ z \in \C \mid \Re z > 0 \}$ the right half plane.
Using $\exp \colon \C^2 \to (\C^*)^2$, $\exp(z,w) := (e^{2\pi z}, e^{2\pi w})$, 
we obtain an incomplete Reinhardt domain $\exp(T) \subset \C^2$. 
The matrix $A$ acts linearly on $\C^2$ preserving $T$ since $\lambda > 0$, and induces a $\Z$-action on $\exp(T)$, which is properly discontinuous and fixed point free. 
The quotient $X'' := \exp(T)/\Z$ is an open complex surface, and has a compactification $X'$ with two normal singularities (see, for instance, \cite{Zaffran}*{Proposition 2.2}).
The minimal desingularization $X$ of $X'$ is called a \emph{hyperbolic Inoue surface} (also known as an \emph{even Inoue--Hirzebruch surface}). 
 
 This surface $X$ contains a closed Levi flat hypersurface $M := \exp(\mathbf{H}e_1 \oplus i\R e_2)/\Z$.
The Levi foliation of $M$ is induced from the product foliation of $\mathbf{H}e_1 \oplus i\R e_2$ by right half planes. 
The identification $\R \times \R^2 \to \mathbf{H}e_1 \oplus i\R e_2, (t, xe_1+ye_2) \mapsto  (\lambda^t + ix)e_1 + iy e_2$ induces an isomorphism between the Levi foliation of $M$ and the model foliation of a hyperbolic torus bundle, described in Example \ref{ex:hyp_torus}, hence, the Levi foliation is transversely affine.  

The 1-convexity of the complement of $M$ may follow from \cite{Brunella}*{Proposition 2.1}. 
The defining 1-form $\omega = \lambda^t dy$ of the model foliation induces a hermitian metric of the normal bundle to the Levi foliation, $| \frac{\partial}{\partial y}|^2 = \lambda^{2t} = (\Re z)^2$, where $z = \lambda^t + ix$ is a holomorphic coordinate of leaves. 
Its leafwise curvature form is $\opa_z \pa_z \log (\Re z)^2 = 4dz \wedge \ol{dz}/(\Re z)^2$ and positive everywhere.
\end{Example}

The second and third examples show that compact complex surfaces, regardless of being K\"ahler, may contain a real analytic
closed Levi flat hypersurface whose complement is Stein and Levi foliation is transversely affine \emph{away from some compact leaves}.  
We remark that in these examples the Levi foliation extends to a \emph{degenerate} transversely affine foliation of the ambient space (cf. Remark \ref{Remark}).

\begin{Example}[Affine line bundles compactified by the section at infinity, cf. \cite{Diederich-Ohsawa}]
Let $\Sigma$ be a compact Riemann surface of genus $\geq 1$. 
Take a non-trivial representation $\rho \colon \pi_1(\Sigma) \to \mathrm{Aff^+}(\R) := \{ x \mapsto ax +b \mid a > 0, b \in \R\}$. 
We regard $\mathrm{Aff^+}(\R) \subset \operatorname{Aut}(\mathbb{CP}^1)$.
We obtain a ruled surface $X$ over $\Sigma$ by $X := \Sigma \times_\rho \mathbb{CP}^1 := \widetilde{\Sigma} \times \mathbb{CP}^1 / \thicksim$ 
where $\widetilde{\Sigma}$ is a universal covering of $\Sigma$ and 
we identify $(z, \zeta) \sim (\gamma z, \rho(\gamma)\zeta)$ for $\gamma \in \pi_1(\Sigma)$.
This ruled surface $X$ contains a closed Levi flat hypersurface $M := \Sigma \times_\rho \mathbb{RP}^1$.
Since the holonomy $\rho$ is real affine, the Levi foliation of $M$ is transversely affine away from a compact leaf, $\Sigma \times \{ \infty \}$, the section at infinity. 
A proof of the Steinness of the complement can be found in 
\cite{Barrett}*{Theorem 2, Case 2 of the proof for (g) $\Rightarrow$ (b)}.
\end{Example}

\begin{Example}[Torus bundles, cf. \cite{Nemirovski}]
Let $\Sigma$ be a compact Riemann surface, and $L$ a holomorphic line bundle.
A torus bundle $X$ over $\Sigma$ is obtained by $X : = (L \setminus Z) / \Z$ 
where $Z$ is the zero section of $L$ and $\Z$-action is defined by $\zeta \mapsto 2^n\zeta$ for $\zeta \in L$ and $n \in \Z$.
Assume that there is a meromorphic section $s$ of $L$ having simple zeros and poles. 
Then, $M' := \{ t s(z) \in L \mid t \in \R^*, s(z) \neq 0, \infty \} / \Z$ is a Levi flat hypersurface since it is foliated by the graphs of $ts$ where $t \in \R^*$. 
By taking closure of $M'$ in $X$, which is equivalent to adding elliptic curves fibered over $s^{-1}(\{0, \infty\})$ as compact leaves, 
we obtain a smooth closed Levi flat hypersurface $M$ in $X$.
Note that the Levi foliation of $M$ is transversely affine away from the compact leaves since $M'$ has two connected components, each of which is isomorphic to the product $\Sigma \setminus s^{-1}(\{0,\infty\}) \times S^1$. 
The complement is Stein because each connected component is isomorphic to the product of the open Riemann surface $\Sigma \setminus s^{-1}(\{0,\infty\})$ with an annulus. \end{Example}

\section*{Acknowledgements}
We thank Carolina Canales Gonz\'{a}lez for explaining some details of her work \cite{Canales}. 
We are grateful to Yoshihiko Mitsumatsu and Noboru Ogawa for pointing out inaccuracies in the first draft of this paper,
and to the referees for their thorough remarks.\\

This preprint has not undergone peer review or any post-submission improvements or corrections.\\
The Version of Record of this article is published in Mathematische Zeitschrift, and is available online at https://doi.org/10.1007/s00209-021-02927-z

\begin{bibdiv}
\begin{biblist}
\bib{Barrett}{article}{
   author={Barrett, David E.},
   title={Global convexity properties of some families of three-dimensional
   compact Levi-flat hypersurfaces},
   journal={Trans. Amer. Math. Soc.},
   volume={332},
   date={1992},
   number={1},
   pages={459--474},
   url={https://www.ams.org/journals/tran/1992-332-01/S0002-9947-1992-1055805-3/home.html},
}

\bib{Malgrange}{book}{
   author={Borel, A.},
   author={Grivel, P.-P.},
   author={Kaup, B.},
   author={Haefliger, A.},
   author={Malgrange, B.},
   author={Ehlers, F.},
   title={Algebraic $D$-modules},
   series={Perspectives in Mathematics},
   volume={2},
   publisher={Academic Press, Inc., Boston, MA},
   date={1987},
}

\bib{Brunella}{article}{
author={Brunella, Marco},
title={On the dynamics of codimension one holomorphic foliations with ample normal
bundle},
journal={Indiana Univ. Math. J.},
volume={57},
year={2008},
pages={3101--3113},
url={https://www.jstor.org/stable/pdf/24903088.pdf},
}

\bib{Canales}{article}{
   author = {Canales Gonz\'{a}lez, Carolina},
   title = {Levi-flat hypersurfaces and their complement in complex surfaces},
   journal = {Ann. Inst. Fourier (Grenoble)},
   volume={67},
   number={6},
   pages={2423--2462},
   year = {2017},
   url={http://www.numdam.org/item/AIF_2017__67_6_2423_0/},
}
\bib{Cartan}{article}{
   author={Cartan, Elie},
   title={Sur la g\'{e}om\'{e}trie pseudo-conforme des hypersurfaces de l'espace de
   deux variables complexes},
   journal={Ann. Mat. Pura Appl.},
   volume={11},
   date={1933},
   number={1},
   pages={17--90},
   url={http://www.numdam.org/item/ASNSP_1932_2_1_4_333_0/},
}

\bib{Cerveau}{article}{
   author={Cerveau, Dominique},
   title={Minimaux des feuilletages alg\`ebriques de $\mathbb{CP}^n$},
   journal={Ann. Inst. Fourier},
   volume={43},
   date={1993},
   pages={1535--1543},
   url={http://www.numdam.org/item/AIF_1993__43_5_1535_0/},
}
\bib{Cousin-Pereira}{article}{
   author={Cousin, Ga\"{e}l},
   author={Pereira, Jorge Vit\'{o}rio},
   title={Transversely affine foliations on projective manifolds},
   journal={Math. Res. Lett.},
   volume={21},
   date={2014},
   number={5},
   pages={985--1014},
   url={http://www.numdam.org/article/ASENS_1997_4_30_2_169_0.pdf},
}
\bib{Deligne}{book}{
   author={Deligne, Pierre},
   title={\'{E}quations diff\'{e}rentielles \`a points singuliers r\'{e}guliers},
   series={Lecture Notes in Mathematics, Vol. 163},
   publisher={Springer-Verlag, Berlin-New York},
   date={1970},
   note={Erratum, April 1971},
   url={https://www.springer.com/gp/book/9783540051909},
}
\bib{Diederich-Ohsawa}{article}{
   author={Diederich, Klas},
   author={Ohsawa, Takeo},
   title={Harmonic mappings and disc bundles over compact K\"{a}hler manifolds},
   journal={Publ. Res. Inst. Math. Sci.},
   volume={21},
   date={1985},
   number={4},
   pages={819--833},
   url={https://pdfs.semanticscholar.org/12db/ff955eafe6b28b052c1fdd73ca2cf34b61f6.pdf?_ga=2.226052570.1141253751.1613076722-1424718184.1613076722},
}

\bib{Fedida-Furness}{article}{
   author={F\'{e}dida, E.},
   author={Furness, P. M. D.},
   title={Tranversally affine foliations},
   journal={Glasgow Math. J.},
   volume={17},
   date={1976},
   number={2},
   pages={106--111},
   url={https://www.cambridge.org/core/services/aop-cambridge-core/content/view/B73094E7FE577341B6FE76FDD2995E15/S0017089500002810a.pdf/transversally-affine-foliations.pdf}
}

\bib{Ghys91}{article}{
   author={Ghys, \'Etienne},
   title={Flots transversalement affines et tissus feuillet\'es},
   journal={M\'emoires de la S.M.F, 2\'eme s\'erie},
   volume={46},
   date={1991},
   pages={123--150},
   url={http://www.numdam.org/item/MSMF_1991_2_46__123_0/}
}

\bib{Ghys-Sergiescu}{article}{
author={Ghys, \'Etienne},
author={Sergiescu, Vlad},
title={Stabilit\'e et conjugaison diff\'erentiable pour certains feuilletages},
journal={Topology},
volume={19},
number={2},
pages={179--197},
year={1980},
url={https://www.sciencedirect.com/science/article/pii/0040938380900051},}

\bib{Godbillon}{book}{
author={Godbillon, C.},
      title={Feuilletages},
   series={Progress in Mathematics},
   volume={98},
   publisher={Birkh\"auser Verlag, Basel},
   date={1991},
}

\bib{Iordan-Matthey}{article}{
  author = {Iordan, Andrei},
  author={Matthey, Fanny},
  title = {R\'egularit\'e de l'op\'erateur $\opa$ et th\'eor\`eme de Siu sur la non-existence d'hypersurfaces Levi-plates dans l'espace projectif complexe $\C\PP_n, n\geq 3$},
  journal = {C. R. Math. Acad. Sci. Paris},
  volume={346}, 
  pages={395--400},
  year={2008},
  url={https://www.sciencedirect.com/science/article/pii/S1631073X08000812},
}
\bib{Ivashkovich}{article}{
   author={Ivashkovich, S.},
   title={Extension properties of complex analytic objects},
   journal={Max-Planck-Institut f\"ur Mathematik Preprint Series},
   volume={15},
   date={2013},
   url={https://www.mpim-bonn.mpg.de/preblob/5365},
}

\bib{LinsNeto}{article}{
  author = {Lins Neto, Alcides},
  title = {A note on projective Levi flats and minimal sets of algebraic foliations},
  journal = {Ann. Inst. Fourier},
  volume={49}, 
  pages={1369--1385},
  year={1999},
  url={http://www.numdam.org/item/AIF_1999__49_4_1369_0/},
}

\bib{Nemirovski}{article}{
   author={Nemirovski\u{\i}, S. Yu.},
   title={Stein domains with Levi-plane boundaries on compact complex
   surfaces},
   journal={Mat. Zametki},
   volume={66},
   date={1999},
   number={4},
   pages={632--635},
   translation={
      journal={Math. Notes},
      volume={66},
      date={1999},
      number={3-4},
      pages={522--525 (2000)},
      url={http://www.mathnet.ru/php/archive.phtml?wshow=paper&jrnid=mzm&paperid=1206&option_lang=eng}  },
}
\bib{Ohsawa1982}{article}{
   author={Ohsawa, Takeo},
   title={A Stein domain with smooth boundary which has a product structure},
   journal={Publ. Res. Inst. Math. Sci.},
   volume={18},
   date={1982},
   number={3},
   pages={1185--1186},
   url={https://www.jstage.jst.go.jp/article/kyotoms1969/18/3/18_3_1185/_article/-char/ja},
}
\bib{Ohsawa2007}{article}{
  author={Ohsawa, Takeo},
  title={On the complement of Levi-flats in K\"{a}hler manifolds of dimension $\geq 3$},
  journal={Nagoya Math. J.},
  volume={185},
  pages={161--169},
  year={2007},
  url={https://projecteuclid.org/euclid.nmj/1174668920},
}
\bib{OhsawaBook}{book}{
   author={Ohsawa, Takeo},
   title={$L^2$ approaches in several complex variables},
   series={Springer Monographs in Mathematics},
   publisher={Springer, Tokyo},
   date={2018},
   pages={xi+258},
}
\bib{Ohtsuki}{article}{
   author={Ohtsuki, Makoto},
   title={A residue formula for Chern classes associated with logarithmic
   connections},
   journal={Tokyo J. Math.},
   volume={5},
   date={1982},
   number={1},
   pages={13--21},
   url={https://projecteuclid.org/euclid.tjm/1270215030},
}
\bib{Peternell}{article}{
   author={Peternell, Th.},
   title={Pseudoconvexity, the Levi problem and vanishing theorems},
   conference={
      title={Several complex variables, VII},
   },
   book={
      series={Encyclopaedia Math. Sci.},
      volume={74},
      publisher={Springer, Berlin},
   },
   date={1994},
   pages={221--257},
   url={https://link.springer.com/chapter/10.1007/978-3-662-09873-8_6},
}
\bib{Scardua}{article}{
   author={Sc\'{a}rdua, B. Azevedo},
   title={Transversely affine and transversely projective holomorphic
   foliations},
   journal={Ann. Sci. \'{E}cole Norm. Sup. (4)},
   volume={30},
   date={1997},
   number={2},
   pages={169--204},
   url={http://www.numdam.org/article/ASENS_1997_4_30_2_169_0.pdf},
}

\bib{Seke}{article}{
author={Seke, Bobo },
title={Sur les structures transversalement affines des feuilletages de codimension un},
journal={Ann. Inst. Fourier (Grenoble)},
volume={30},
number={1},
date={1980},
pages={1--29},
url={http://www.numdam.org/article/AIF_1980__30_1_1_0.pdf},
}

\bib{Takeuchi}{article}{
   author={Takeuchi, Akira},
   title={Domaines pseudoconvexes infinis et la m\'{e}trique riemannienne dans
   un espace projectif},
   journal={J. Math. Soc. Japan},
   volume={16},
   date={1964},
   pages={159--181},
   url={https://projecteuclid.org/euclid.jmsj/1260976109},
}
\bib{Zaffran}{article}{
   author={Zaffran, Dan},
   title={Serre problem and Inoue-Hirzebruch surfaces},
   journal={Math. Ann.},
   volume={319},
   date={2001},
   number={2},
   pages={395--420},
   url={https://link.springer.com/article/10.1007/PL00004440},
}

\end{biblist}
\end{bibdiv}
\end{document}